\numberwithin{equation}{section}
\theoremstyle{plain}
\newtheorem{thm}[equation]{Theorem}
\newtheorem{lemma}[equation]{Lemma}
\theoremstyle{definition}
\newtheorem{remark}[equation]{Remark}
\DeclareMathOperator{\charp}{char}
\renewcommand{\bar}[1]{#1\llap{$\overline{\phantom{\rm#1}}$}}
\title{Factorizations of certain bivariate polynomials}
\author{Michael E. Zieve}
\address{
Michael Zieve \\
  Department of Mathematics \\
  University of Michigan \\
  Ann Arbor, MI 48109--1043 \\
  USA
}
\address{Mathematical Sciences Center \\
Tsinghua University \\
Beijing 100084 \\
 China}
\email{zieve@umich.edu}
\urladdr{www.math.lsa.umich.edu/$\sim$zieve/}
\date{}
\begin{document}

\begin{abstract}
We determine the factorization of $X f(X) - Y g(Y)$ over $K[X,Y]$ for all squarefree additive
polynomials $f,g\in K[X]$ and all fields $K$ of odd characteristic.  This answers a question
of Kaloyan Slavov, who needed these factorizations in connection with an algebraic-geometric analogue of
the Kakeya problem.
\end{abstract}

\thanks{The author thanks the NSF for support under grant DMS-1162181.}

\maketitle


\section{Introduction}

Many authors have studied the problem of determining the factorizations of all members of some infinite
class of bivariate polynomials.  Special attention has been paid to the case of polynomials with
separated variables, that is, polynomials of the form $F(X)-G(Y)$ where $F(X)$ and $G(Y)$  are
univariate polynomials.  Although there have been some remarkable advances, still there is no general
solution.

We will determine the factorizations of $F(X)-G(Y)$ for a special class of polynomials $F$ and $G$.
These particular factorizations are needed in order to prove an extreme case of a conjecture by
Kaloyan Slavov, which arose in connection with an algebraic-geometric analogue of the Kakeya problem.

We now define the polynomials under consideration.
Let $K$ be a field of characteristic $p>0$.  A polynomial $f(X)\in K[X]$ is \emph{additive} if it satisfies
the identity $f(X+Y)=f(X)+f(Y)$.  As is well-known,  additive polynomials are precisely the polynomials of the form
$f(X)=\sum_{i=0}^m \alpha_i X^{p^i}$ where $\alpha_i\in K$; for instance, see \cite[Cor.~1.1.6]{G96}.
In this note we consider polynomials of the form $X f(X) - Y g(Y)$, where $f(X)$ and $g(X)$ are squarefree
additive polynomials in $K[X]$ with $p>2$.  Note that since $f(X)$ is additive we have $f'(X)=f'(0)$, so that
$f(X)$ is squarefree if and only if $f'(0)\ne 0$.  We prove the following result.

\begin{thm} \label{zmain}
Let $K$ be a field of characteristic $p>2$, let $f,g\in K[X]$ be additive polynomials with $f'(0)g'(0)\ne 0$,
and let $F(X):=X f(X)$ and $G(X):=X g(X)$.
\begin{enumerate}
\item \label{zmain1} $F(X)-G(Y)$ is reducible in $K[X,Y]$ if and only if $g(Y)=\delta\cdot f(\delta Y)$ for some
$\delta\in\bar K^*$ such that either $\deg(f)>1$ or $\delta\in K$.
\item \label{zmain2} If $g(Y) = \delta \cdot f(\delta Y)$ for some $\delta\in\bar K^*$, and $\deg(g)>1$, then
$\delta^2\in K$ and
$F(X)-G(Y)$ is the product of $(X-\delta Y)$, $(X+\delta Y)$, and an irreducible polynomial in $K[X,Y]$.
\end{enumerate}
\end{thm}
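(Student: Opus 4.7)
My plan is to establish part (2) first (which also gives the ``if'' direction of part (1)), and then dispatch the ``only if'' direction of part (1).

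Starting with part (2): assume $g(Y) = \delta f(\delta Y)$ with $\delta\in \bar K^*$ and $\deg g > 1$. Comparing coefficients in $g(Y) = \sum_i \alpha_i \delta^{p^i+1} Y^{p^i}$ forces $g'(0) = \alpha_0\delta^2$, whence $\delta^2 = g'(0)/\alpha_0 \in K^*$. Since $\charp K > 2$ and $f$ is additive, $f(-W) = -f(W)$, so $F(W) = W f(W)$ is even. Therefore $F(X) - F(\delta Y)$ vanishes at $X = \pm\delta Y$, and is divisible by $(X - \delta Y)(X + \delta Y) = X^2 - \delta^2 Y^2 \in K[X, Y]$; this yields the two claimed linear factors. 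The remaining task is to prove that the quotient $H(X, Y) := (F(X) - G(Y))/(X^2 - \delta^2 Y^2)$ is irreducible in $K[X, Y]$.

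For this irreducibility, I would first use additivity of $f$ together with the identity $f(W/2) = f(W)/2$ (valid in characteristic $p$ since $2^{p^i} = 2$) to derive the explicit formula
\[
  H(X, Y) = \tfrac{1}{2}\bigl[\varphi(X - \delta Y) + \varphi(X + \delta Y)\bigr],
  \qquad
  \varphi(W) := f(W)/W = \sum_i \alpha_i W^{p^i - 1}.
\]
The linear change of variables $U = X - \delta Y$, $V = X + \delta Y$---invertible over $K(\delta)$---transforms $H$ into $(\varphi(U) + \varphi(V))/2$, so it suffices to prove this polynomial is irreducible in $\bar K[U, V]$; irreducibility over $\bar K$ yields irreducibility over $K(\delta)$, which implies irreducibility of $H$ over $K$. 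Viewed as a polynomial in $U$ over $\bar K(V)$, $\varphi(U) + \varphi(V)$ has degree $p^m - 1$, and its roots together with $0$ constitute the kernel $W$ of the separable additive polynomial $U \mapsto f(U) + \varphi(V) U$, which is an $\mathbb{F}_p$-vector space of dimension $m$. The Galois group of $\bar K(V)(W)/\bar K(V)$ acts $\mathbb{F}_p$-linearly on $W$, embedding in $\mathrm{GL}_m(\mathbb{F}_p)$; the desired irreducibility is equivalent to this group acting transitively on $W \setminus \{0\}$. Proving this transitivity is the main anticipated obstacle and likely requires a careful analysis of the monodromy of the cover $X \mapsto F(X)$.

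For the ``only if'' direction of part (1), I would appeal to a version of Fried's theorem on the reducibility of separated-variables polynomials: if $F(X) - G(Y)$ is reducible in $K[X, Y]$, then there exist polynomials $L, F_1, G_1 \in \bar K[X]$ with $\deg L > 1$ satisfying $F = L \circ F_1$ and $G = L \circ G_1$. Combined with the canonical decomposition $F = P \circ X^2$ coming from the evenness of $F$ (where $P(T) = \sum_i \alpha_i T^{(p^i + 1)/2}$), and a case analysis of which decompositions $F = L \circ F_1$ are compatible with the additive structure of $f$, this forces $G_1(Y) = \mu Y^2$ for some $\mu \in K^*$, whence $G(Y) = P(\mu Y^2) = F(\delta Y)$ with $\delta^2 = \mu$.
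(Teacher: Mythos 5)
Your reduction in part (2) is correct and attractive: the identity $H(X,Y)=\tfrac12\bigl(\varphi(U)+\varphi(V)\bigr)$ with $U=X-\delta Y$, $V=X+\delta Y$ does hold (a short computation with $f(U+V)=f(U)+f(V)$ and $f(W/2)=f(W)/2$ confirms it), and the translation of irreducibility into transitivity of the Galois group on the nonzero kernel of $U\mapsto f(U)+\varphi(V)U$ is valid. But that transitivity is precisely the entire content of part (2), and you leave it unproved, saying only that it ``likely requires a careful analysis of the monodromy.'' This is not a proof; it is a correct restatement of the problem. The paper resolves the difficulty by a different route: writing $F=A\circ X^2$, it shows via the critical-value analysis of Lemma \ref{zram} that $A$ is a Morse polynomial, so $\mathrm{Gal}(A(X)-t/\bar K(t))$ is the full symmetric group, whence $(A(X)-A(Y))/(X-Y)$ is irreducible; it then controls the degree-$2$ extensions introduced by the substitutions $X\mapsto X^2$, $Y\mapsto \delta^2Y^2$ by comparing branch loci ($u=0,\infty$ versus $v=0,\infty$) to see the two quadratic extensions are distinct. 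Some such input about the monodromy is unavoidable, and your sketch supplies none.

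The ``only if'' direction of part (1) rests on a false lemma. The statement you attribute to Fried --- reducibility of $F(X)-G(Y)$ implies a common left composition factor $F=L\circ F_1$, $G=L\circ G_1$ with $\deg L>1$ --- is not true in general; Fried's own degree-$7$ examples give indecomposable, non-linearly-related $f,g$ with $f(X)-g(Y)$ reducible, and for such $f,g$ a common $L$ of degree $>1$ would force $g=f\circ\ell$ with $\ell$ linear, a contradiction. The correct statement (Lemma \ref{zfried} of the paper) only provides decompositions $F=F_1\circ F_2$, $G=G_1\circ G_2$ such that the factors of $F(X)-G(Y)$ correspond to those of $F_1(X)-G_1(Y)$ and such that $F_1(X)-t$ and $G_1(X)-t$ have the \emph{same splitting field} over $K(t)$; it does not make $F_1$ and $G_1$ equal. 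Bridging that gap is where the real work of part (1) lies: the paper uses the shared splitting field to show $F_1$ and $G_1$ have the same degree and the same critical values, computes those critical values explicitly via Lemma \ref{zram} as $-\alpha\hat f(0)$ over the roots $\alpha$ of $\hat f$, and only then deduces $\hat g(X)=\gamma\hat f(\gamma X)$ and hence $g(X)=\delta f(\delta X)$. Your concluding sentence compresses all of this into ``a case analysis \ldots forces $G_1(Y)=\mu Y^2$,'' which does not follow from any correct version of the input you invoke.
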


\begin{remark}
For completeness, we observe that if $f(X)=\alpha X$ and $g(Y)=\beta Y$ then $X f(X)-Y g(Y)=
\alpha(X-\gamma Y)(X+\gamma Y)$ where $\gamma^2=\beta/\alpha$.
\end{remark}


\section{Preliminary results}

In this section we state two results which will be used in our proof of Theorem~\ref{zmain}.

\begin{lemma} \label{zfried}
Let $K$ be a field, and let $F,G\in K[X]$ have nonzero derivatives.  Then there exist $F_1,F_2,G_1,G_2\in K[X]$
such that all of the following hold:
\begin{itemize}
\item $F=F_1\circ F_2$ \quad and \quad $G=G_1\circ G_2$
\item the factors of $F(X)-G(Y)$ in $K[X,Y]$ are precisely the polynomials $H(F_2(X),G_2(Y))$ where
$H(X,Y)\in K[X,Y]$ is a factor of $F_1(X)-G_1(Y)$
\item the splitting field of $F_1(X)-t$ over $K(t)$ equals the splitting field of $G_1(X)-t$ over $K(t)$,
where $t$ is transcendental over $K$.
\end{itemize}
\end{lemma}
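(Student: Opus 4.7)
The plan is to use Galois theory of the splitting fields of $F(X)-t$ and $G(X)-t$ over $K(t)$. Take $t$ transcendental over $K$, pick roots $x,y$ of $F(X)-t$ and $G(Y)-t$ in $\overline{K(t)}$, and let $\Omega_F,\Omega_G$ be their Galois closures over $K(t)$, with compositum $\Omega:=\Omega_F\Omega_G$ and Galois group $A:=\mathrm{Gal}(\Omega/K(t))$. Write $A_x,A_y,N_F,N_G$ for the subgroups of $A$ fixing $K(x),K(y),\Omega_F,\Omega_G$ respectively; then $N_F,N_G$ are normal in $A$, with $N_F\subseteq A_x$ and $N_G\subseteq A_y$.

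First I would construct the polynomial decompositions. Define the intermediate fields $L_F:=K(x)\cap\Omega_G$ and $L_G:=K(y)\cap\Omega_F$; by the Galois correspondence these are the fixed fields of $A_xN_G$ and $A_yN_F$. Because $F\in K[X]$, the infinite place of $K(t)$ is totally ramified in $K(x)/K(t)$, so $L_F$ has a unique place above it with residue field $K$. Combining this with L\"uroth's theorem, I can write $L_F=K(s)$ for a generator $s$ whose only pole is at this place, which forces $s$ to be a polynomial in $x$: say $s=F_2(x)$ with $F_2\in K[X]$. The element $t\in L_F$ likewise has a unique pole, so $t=F_1(s)$ for some $F_1\in K[X]$, and hence $F=F_1\circ F_2$. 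The analogous construction applied to $L_G$ yields $G=G_1\circ G_2$ with $G_2(y)=s'$.

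Next I would verify the two remaining properties. Both $s$ and $s'$ lie in $M:=\Omega_F\cap\Omega_G$, which is Galois over $K(t)$, so the splitting fields of $F_1(X)-t$ and $G_1(X)-t$ (the Galois closures of $K(s)/K(t)$ and $K(s')/K(t)$) are contained in $M$. To show they coincide, I would prove that the cores of $A_xN_G$ and $A_yN_F$ in $A$ both equal $N_FN_G=\mathrm{Gal}(\Omega/M)$, using the normality of $N_F,N_G$ together with $N_F\subseteq A_x$ and $N_G\subseteq A_y$. For the factorization property, factors of $F(X)-G(Y)$ in $K[X,Y]$ correspond, by factoring $G(Y)-t$ over $K(x)$, to orbits of $\mathrm{Gal}(\Omega_G/L_F)$ on the roots of $G(Y)-t$ in $\Omega_G$, and factors of $F_1(X)-G_1(Y)$ correspond analogously to orbits of $\mathrm{Gal}(M/K(s))$ on the roots of $G_1(Y)-t$ in $M$. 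Since $L_F\subseteq M$, the kernel of the restriction $\mathrm{Gal}(\Omega_G/K(t))\twoheadrightarrow\mathrm{Gal}(M/K(t))$ is contained in $\mathrm{Gal}(\Omega_G/L_F)$; this inclusion forces the induced map on double cosets to be a bijection, yielding the desired correspondence $H(U,V)\mapsto H(F_2(X),G_2(Y))$.

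The hard part will be the group-theoretic identification of $\mathrm{core}_A(A_xN_G)$ as precisely $N_FN_G$, i.e.\ the assertion that the Galois closure of $L_F/K(t)$ inside $\Omega$ is all of $M$ (and not a proper Galois subextension). Equivalently, the image of $A_x$ in $\mathrm{Gal}(M/K(t))$ must have trivial core, which amounts to showing that the conjugates of $L_F=K(x)\cap M$ under $\mathrm{Gal}(M/K(t))$ generate $M$. This does not follow from the abstract Galois correspondence alone and must be extracted from the concrete structure of polynomial extensions, specifically the way $\Omega_F$ is built up from the conjugates of $K(x)$ and how these conjugates interact with $\Omega_G$.
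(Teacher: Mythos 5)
The paper does not actually prove this lemma: it is quoted from the literature (Fried's Proposition~2, as exposited in Bilu--Tichy, with alternative proofs in Engler--Khanduja and M\"uller--Zieve), so there is no in-paper argument to compare against. Your outline does follow Fried's known approach, and two of the three bullet points are essentially in order: the construction of $F_1,F_2,G_1,G_2$ via $L_F=K(x)\cap\Omega_G$, L\"uroth, and total ramification of $t=\infty$ is correct, and your double-coset argument for the factorization correspondence actually closes cleanly --- since $N_F\trianglelefteq A$ and $N_F\subseteq A_x$, one has $A_xN_G\sigma A_yN_F=A_xN_GN_F\sigma A_y=A_xN_G\sigma A_y$, so the double cosets literally coincide as sets and the orbit correspondence is a bijection.

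The genuine gap is the third bullet, which you explicitly defer as ``the hard part'': the identification of $\mathrm{core}_A(A_xN_G)$ and $\mathrm{core}_A(A_yN_F)$ with $N_FN_G$, equivalently the equality of the two splitting fields. This is not a routine verification to be filled in later --- it is the entire content of the lemma, and it is \emph{false} at the level of abstract group theory. For instance, take $A=S_3$ acting on three points, $N_F=1$, $N_G=A_3$: then $A_xN_G=S_3$ for every point stabilizer $A_x$, so $\mathrm{core}_A(A_xN_G)=S_3\ne N_FN_G=A_3$. One can similarly cook up configurations (e.g.\ $A=S_4$ with $N_G=V_4$, $A_x\cong S_3$) in which $\mathrm{core}_A(A_xN_G)=A$ while $\mathrm{core}_A(A_yN_F)$ is proper, so that the two splitting fields produced by your one-step construction would differ. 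What rules these out is precisely the polynomial structure: the inertia at $t=\infty$ is generated by an element acting as a full cycle on the roots of $F(X)-t$ (and of $G(Y)-t$), and this ramification-theoretic input --- which in characteristic $p$ with $p\mid\deg F$ also involves wild ramification, where Abhyankar-type arguments need extra care --- must be fed into the group theory (in the cited proofs this is done either by a careful analysis of the inertia groups or by iterating the construction until $\Omega_F=\Omega_G$). Without that argument, your proposal establishes the decomposition and the factorization correspondence but not the assertion that makes the lemma useful in this paper, namely the coincidence of the splitting fields of $F_1(X)-t$ and $G_1(X)-t$.
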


Fried proved a version of this result in \cite[Prop.~2]{Fried}; see also \cite[Thm.~8.1]{BT}
for a nice exposition of Fried's proof, which yields the above statement.  Although these references state
the result for fields of characteristic zero, the proofs extend at once to fields of arbitrary characteristic.
A different proof of Lemma~\ref{zfried} is given in \cite[Thm.~1.1]{EK}, and yet another proof
will appear in the forthcoming paper \cite{MZ}.

We also use the following result about polynomial decomposition:

\begin{lemma} \label{zunique}
Let $K$ be a field, and let $F,G,\hat F,\hat G\in K[X]$ be nonconstant polynomials satisfying
\begin{itemize}
\item $F\circ G=\hat F\circ \hat G$
\item $\deg(F)=\deg(\hat F)$
\item $\charp(K)\nmid\deg(F)$.
\end{itemize}
Then $\hat G = L\circ G$ for some degree-one polynomial $L(X)\in K[X]$.
\end{lemma}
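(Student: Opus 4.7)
My plan is to establish, after a normalization, an identity
\[
(F - \hat F)\bigl(\hat G(X)\bigr) = -\bigl(G(X) - \hat G(X)\bigr) \cdot S(X)
\]
in $K[X]$ where $S$ has known large $X$-degree, and then force $G - \hat G$ to be constant by a degree count. Write $m := \deg G = \deg \hat G$ (equal because $\deg F = \deg \hat F$), and let $b_m, \hat b_m$ and $g_0, \hat g_0$ denote the leading coefficients and constant terms of $G$ and $\hat G$. I would first replace $(F(X), G(X))$ by $\bigl(F(b_m X + g_0),\ b_m^{-1}(G(X) - g_0)\bigr)$, and similarly the hatted pair, which preserves $F \circ G = \hat F \circ \hat G$ and all degrees while making $G, \hat G$ monic with $G(0) = \hat G(0) = 0$. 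Comparing the coefficient of $X^{mn}$ on the two sides in the normalized setting forces $F$ and $\hat F$ to share the same leading coefficient $A$, so $F - \hat F$ has degree at most $n - 1$. Any degree-one $L$ producing the normalized conclusion unwinds to a degree-one $L \in K[X]$ giving the original conclusion.

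The identity comes from $F(Y) - F(Z) = (Y - Z) \sum_k a_k \sum_{j=0}^{k-1} Y^{k-1-j} Z^j$, where $F(W) = \sum_k a_k W^k$. Substituting $Y = G(X)$ and $Z = \hat G(X)$ and using $F(G) = \hat F(\hat G)$ yields the displayed identity with $S(X) = \sum_k a_k \sum_{j=0}^{k-1} G(X)^{k-1-j} \hat G(X)^j$. Because $G$ and $\hat G$ are monic of degree $m$, each summand $G^{k-1-j} \hat G^j$ has $X$-degree $m(k-1)$ with leading coefficient $1$; the $n$ summands coming from $k = n$ reach the top $X$-degree $m(n-1)$, and they contribute a combined leading coefficient $nA$.

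The degree count finishes the argument. The left side has $X$-degree at most $m(n-1)$. On the right, the leading coefficients of $G$ and $\hat G$ agree, so $\deg(G - \hat G) \leq m - 1$; if $G \neq \hat G$, the right side has $X$-degree $\deg(G - \hat G) + m(n-1)$ with coefficient $-nA$ times the leading coefficient of $G - \hat G$. The hypothesis $\charp(K) \nmid n$ keeps this coefficient nonzero, forcing $\deg(G - \hat G) \leq 0$; combined with $G(0) = \hat G(0) = 0$, this yields $G = \hat G$, after which unwinding the normalization produces $L \in K[X]$ of degree one with $\hat G = L \circ G$. The main point where the characteristic hypothesis is essential is exactly this: without $\charp(K) \nmid n$, the leading coefficient $nA$ of $S$ could vanish and the degree bound on the right side would collapse, destroying the argument.
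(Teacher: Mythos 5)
Your proof is correct. Note, though, that the paper does not prove Lemma~\ref{zunique} at all: it simply cites Levi \cite{Levi} for characteristic zero and Turnwald \cite{Turnwald} for the observation that the argument carries over, so there is no in-paper proof to compare against line by line. What you have written is a valid self-contained argument in the same classical spirit as those references: normalize so that $G$ and $\hat G$ are monic with zero constant term (which is harmless since the conclusion is only asserted up to a degree-one $L$), deduce $\deg G=\deg\hat G=m$ and that $F,\hat F$ share leading coefficient $A$, and then exploit the identity $(F-\hat F)(\hat G)=-(G-\hat G)\,S$ with $S=\sum_k a_k\sum_{j=0}^{k-1}G^{k-1-j}\hat G^{\,j}$. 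The key points all check out: the $k=n$ terms give $S$ the exact degree $m(n-1)$ with leading coefficient $nA\neq 0$ precisely because $\charp(K)\nmid n$, the left side has degree at most $m(n-1)$ since $\deg(F-\hat F)\le n-1$, and the resulting bound $\deg(G-\hat G)\le 0$ together with the vanishing constant terms forces $G=\hat G$, which unwinds to $\hat G=L\circ G$. You have also correctly identified the unique place where the characteristic hypothesis is indispensable. The only thing you lose relative to the cited sources is nothing of substance; if anything, supplying the elementary degree-count proof makes the paper more self-contained.
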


This result was proved for $\charp(K)=0$ in \cite[\S 2]{Levi}.  The proof extends at once
to arbitrary characteristic, as noted in \cite[Prop.~2.2]{Turnwald}.


\section{The main result}

In this section we prove Theorem~\ref{zmain}.  We begin with two lemmas.

\begin{lemma} \label{zram}
Let $K$ be a an algebraically closed field of odd characteristic, and let $F(X):=X f(X)$ where
$f(X)\in K[X]$ is an additive polynomial with $f'(0)\ne 0$.  Write $f(X)+Xf'(0)=X\hat{f}(X^2)$
with $\hat{f}\in K[X]$.  Then $\hat{f}$ is a squarefree polynomial of degree $(\deg(f)-1)/2$
with $\hat{f}(0)\ne 0$, and the nonzero finite critical values of $F(X)$ are the values
$-\beta \hat{f}(0)$ where $\beta$ varies over the roots of $\hat{f}(X)$.
For each such value $\gamma$, the polynomial $F(X)-\gamma$ has two roots of multiplicity $2$
and all other roots of multiplicity $1$.
Also $F(X)$ has one root of multiplicity $2$, and all other roots of multiplicity $1$.
If we write $F=A\circ X^2$ with $A\in K[X]$, then the finite critical values of $A(X)$ are precisely
the nonzero finite critical values of $F(X)$, and for each such value $\gamma$ the polynomial
$A(X)-\gamma$ has one root of multiplicity $2$ and all other roots of multiplicity $1$.
\end{lemma}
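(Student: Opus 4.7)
The plan is to deduce all multiplicity assertions about $F(X)-\gamma$ from analogous statements about the auxiliary polynomial $A\in K[X]$ defined by $F(X)=A(X^2)$. Such an $A$ exists because $F(X)=Xf(X)=\sum_i \alpha_i X^{p^i+1}$ involves only even powers of $X$ (each $p^i+1$ is even, since $p$ is odd). Writing $f(X)=\sum_{i=0}^m\alpha_i X^{p^i}$ with $\alpha_0=f'(0)\ne 0$, I would first unpack the definition of $\hat f$:
\[
\hat f(Y)=2\alpha_0+\sum_{i\ge 1}\alpha_i Y^{(p^i-1)/2},
\]
which gives $\hat f(0)=2\alpha_0\ne 0$ and $\deg\hat f=(\deg f-1)/2$ immediately. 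Since $f$ is additive, $f'$ is the constant $\alpha_0$, so $F'(X)=f(X)+Xf'(0)=X\hat f(X^2)$, and comparison with the chain-rule formula $F'(X)=2XA'(X^2)$ yields the convenient identity $\hat f=2A'$.

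The main obstacle is proving $\hat f$ is squarefree. My plan is to establish, by termwise differentiation of the explicit formula above together with the congruence $p^i-1\equiv-1\pmod p$, the polynomial identity
\[
2Y\hat f'(Y)+\hat f(Y)=\hat f(0).
\]
At any root $\beta$ of $\hat f$ this gives $2\beta\hat f'(\beta)=\hat f(0)\ne 0$, forcing $\beta\ne 0$ and $\hat f'(\beta)\ne 0$. Hence $\hat f$ has no multiple roots and no root at $0$.

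With squarefreeness in hand, the critical points of $F$ are the roots of $F'(X)=X\hat f(X^2)$, namely $X=0$ and $X=\pm\sqrt\beta$ for each root $\beta$ of $\hat f$. Evaluating $f(X)+X\alpha_0=X\hat f(X^2)$ at $X=\pm\sqrt\beta$ gives $f(\pm\sqrt\beta)=\mp\sqrt\beta\,\alpha_0$, so $F(\pm\sqrt\beta)=-\beta\alpha_0$ is a nonzero scalar multiple of $\beta$ (of the shape asserted by the lemma). Distinct roots $\beta$ of $\hat f$ yield distinct nonzero values, none equal to $F(0)=0$, accounting for exactly the nonzero finite critical values of $F$. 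For each such $\gamma=A(\beta)$, the identities $A'(\beta)=\hat f(\beta)/2=0$ and $A''(\beta)=\hat f'(\beta)/2\ne 0$ show via Taylor expansion that $\beta$ is a root of $A(Y)-\gamma$ of multiplicity exactly $2$; every other root $\beta'$ of $A(Y)-\gamma$ satisfies $A'(\beta')\ne 0$ (else $\beta'$ would be another root of $\hat f$ with $A(\beta')=A(\beta)$, contradicting the injectivity of $\beta\mapsto -\beta\alpha_0$) and $\beta'\ne 0$ (since $A(0)=0\ne\gamma$), hence is simple.

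Finally, I would translate everything back to $F$ via $Y=X^2$. The factorization $A(Y)-\gamma=(Y-\beta)^2\prod_j(Y-\beta_j')$ with distinct nonzero $\beta_j'$ becomes $F(X)-\gamma=(X-\sqrt\beta)^2(X+\sqrt\beta)^2\prod_j(X-\sqrt{\beta_j'})(X+\sqrt{\beta_j'})$, giving exactly the claimed two double roots $\pm\sqrt\beta$ and otherwise simple roots. For $F(X)$ itself, $A'(0)=\alpha_0\ne 0$ makes $0$ a simple root of $A$, which becomes a double root of $F$ at $X=0$; every nonzero root of $A$ lies outside the critical-value set of $A$ and is therefore simple, contributing only simple roots to $F$. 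The last assertion of the lemma is then immediate: the finite critical values of $A$ are exactly the values $A(\beta)$ as $\beta$ ranges over the roots of $\hat f$, none equals $0$, and the double-root structure of $A(Y)-\gamma$ has already been established.
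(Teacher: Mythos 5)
Your proof is correct and establishes every assertion of the lemma; its skeleton is the same as the paper's (both compute $F'(X)=f(X)+Xf'(0)=X\hat{f}(X^2)=2XA'(X^2)$ and evaluate $F$ at a critical point $\alpha$ to get $F(\alpha)=-\alpha^2 f'(0)$), but you reach the one nontrivial point --- squarefreeness and the multiplicity bounds --- by a genuinely different mechanism. The paper observes that $F'$ is itself an \emph{additive} polynomial whose derivative $F''=2f'(0)$ is a nonzero constant; this single remark shows at once that $F'$ (hence $\hat{f}$) is squarefree and that every root of $F(X)-\gamma$ has multiplicity at most $2$. You instead derive the identity $2Y\hat{f}'(Y)+\hat{f}(Y)=\hat{f}(0)$ from the congruence $p^i-1\equiv -1\pmod p$, conclude $\beta\ne 0$ and $\hat{f}'(\beta)\ne 0$ at every root $\beta$ of $\hat{f}$, and then control multiplicities through $A''(\beta)=\hat{f}'(\beta)/2\ne 0$. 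Your identity is a nice self-contained substitute for the additivity argument and hands you the refined local data on the $A$-side directly, at the cost of an explicit coefficient computation; the paper's route is shorter because ``additive implies constant derivative'' does all of that work implicitly. One incidental point in your favor: your computation gives the nonzero critical values as $-\beta f'(0)=-\tfrac12\beta\hat{f}(0)$, which agrees with the paper's own proof ($F(\alpha)=-\alpha^2 f'(0)$) rather than with the constant $-\beta\hat{f}(0)$ appearing in the lemma's statement; the discrepancy is an immaterial factor of $2$, since only the bijection between roots $\beta$ and critical values is ever used downstream. (Minor wording quibble: ``every nonzero root of $A$ lies outside the critical-value set of $A$'' conflates roots with values --- what you mean, correctly, is that $0$ is not a critical value of $A$, so all roots of $A$ are simple.)
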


\begin{proof}
Since $f'(X)=f'(0)$, we have $F'(X)=f(X) + X f'(0)$.  This is an additive polynomial which is squarefree
because $F''(X)=2f'(0)\ne 0$.  Since $F''(X)$ is a nonzero constant, it follows that all roots of $F(X)-\gamma$
have multiplicity at most $2$ for every $\gamma\in K$.  Moreover, if
$F'(\alpha)=0$ then $F(\alpha)=\alpha f(\alpha)=-\alpha^2 f'(0)$.  Since $F'(X)=2XA'(X^2)$,
the result follows.
\end{proof}

\begin{lemma} \label{zdecomp}
Let $K$ be a field of odd characteristic, and let $f(X)\in K[X]$ be an additive polynomial with $f'(0)\ne 0$.
If $G,H\in K[X]$ satisfy $G\circ H=X f(X)$ then either $\deg(G)=1$ or $\deg(H)=1$ or
$H=L\circ X^2$ for some degree-one $L(X)\in K[X]$.
\end{lemma}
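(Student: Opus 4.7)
The plan is to combine the fact that $F:=Xf(X)$ is a polynomial in $X^2$ with a ramification argument (using Lemma~\ref{zram}) to force $\deg H\le 2$, and then apply Lemma~\ref{zunique} to pin down the precise form of $H$. Since $f$ is additive with $f'(0)\ne0$, we have $\deg f=p^j$ for some $j\ge0$ and $F(X)=\sum_i\alpha_i X^{p^i+1}$; because $p$ is odd, every exponent $p^i+1$ is even, so $F=A\circ X^2$ for some $A\in K[X]$ of degree $(p^j+1)/2$. I assume $\deg G,\deg H\ge2$ and aim to show $H=L\circ X^2$ with $L$ of degree one. The case $j=0$ would force $\deg F=2<\deg G\cdot\deg H$, so necessarily $j\ge1$; then $\deg F=p^j+1$ is coprime to $p$, and hence so are $\deg G$ and $\deg H$.

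The heart of the argument is a ramification computation over $\bar K$ yielding $\deg H\le 2$. Since $p\nmid\deg G$, the derivative $G'$ has degree $\deg G-1\ge1$, so $G$ has at least one critical point $Q\in\bar K$ in the affine line; set $v:=G(Q)$. Writing $e_P(\cdot)$ for the ramification index at a point $P$, one has $e_P(F)=e_P(H)\cdot e_{H(P)}(G)$, and Lemma~\ref{zram} asserts $e_P(F)\le2$ at every affine $P$. This forces $e_Q(G)=2$ and $e_P(H)=1$ for every $P\in H^{-1}(Q)$, so $H^{-1}(Q)$ consists of exactly $\deg H$ distinct points, each a root of $F(X)-v$ of multiplicity $2$. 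But Lemma~\ref{zram} also says $F(X)-v$ has at most two roots of multiplicity $2$, so $\deg H\le2$, whence $\deg H=2$.

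With $\deg H=2$, the identity $\deg G\cdot\deg H=\deg F=2\deg A$ gives $\deg G=\deg A=(p^j+1)/2$, which is coprime to $p$ since $p\nmid p^j+1$. Applying Lemma~\ref{zunique} to the equality $G\circ H=A\circ X^2$ (the two outer polynomials have the same degree, and that degree is coprime to $\charp K$) produces a degree-one $L\in K[X]$ satisfying $X^2=L\circ H$. Solving yields $H=\tilde L\circ X^2$ for the degree-one polynomial $\tilde L:=L^{-1}\in K[X]$, as required.

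The step most in need of care is the ramification analysis: one must rule out the possibility that $G$ has no critical point in $\bar K$, which amounts to checking that $G\notin K[X^p]$ and that $G'$ is nonconstant. Both follow from $p\nmid\deg G$ together with $\deg G\ge2$; moreover, if $G$ were inseparable then every ramification index of $G$ would be a multiple of $p\ge3$, contradicting $e_P(F)\le2$. The multiplicity bookkeeping in $H^{-1}(Q)$ is then the standard chain rule for ramification indices, and the two cited lemmas close the argument cleanly.
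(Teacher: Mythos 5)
Your proof is correct and follows essentially the same route as the paper's: you use the multiplicity bound from Lemma~\ref{zram} at a critical point of $G$ to force $\deg H\le 2$ (the paper phrases this as $(H(X)-\alpha)^2$ dividing $F(X)-G(\alpha)$ rather than via ramification indices, but it is the same mechanism), and then apply Lemma~\ref{zunique} to $G\circ H=A\circ X^2$ to conclude $H=L\circ X^2$. The extra bookkeeping about $j\ge1$ is harmless but unnecessary, since $p^0+1=2$ is already coprime to the odd characteristic.
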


\begin{proof}
Write $F(X):=X f(X)$, and suppose that $G\circ H=F$ where $\deg(G)>1$.
Since $\charp(K)\nmid\deg(G)$, the derivative $G'(X)$ has degree $\deg(G)-1\ge 1$ and hence has
a root $\alpha$.  Thus $(X-\alpha)^2$ divides $G(X)-G(\alpha)$, so substituting $H(X)$ for $X$ shows
that $(H(X)-\alpha)^2$ divides $F(X)-G(\alpha)$.  Now Lemma~\ref{zram} implies that $H(X)-\alpha$
has degree at most $2$, so that $\deg(H)\le 2$.  Since we know that $F=A\circ X^2$ for
some $A\in K[X]$, Lemma~\ref{zunique} implies that if $\deg(H)=2$ then $H=L\circ X^2$ for some
degree-one $L(X)\in K[X]$.
\end{proof}

Now we prove the first part of Theorem~\ref{zmain}.

\begin{proof}[Proof of part (\ref{zmain1}) of Theorem~\ref{zmain}]
First we prove the ``if" implication.  Suppose that $g(Y)=\delta \cdot f(\delta Y)$ for some $\delta\in\bar K^*$
Comparing coefficients of $Y$ shows that $g'(0)=\delta^2 f'(0)$, so that $\delta^2\in K^*$.
Note that all terms of $F(X)$ have even degree, so that $F=A\circ X^2$ for some $A\in K[X]$.
Now $F(X)-G(Y)=F(X)-F(\delta Y)=A(X^2)-A(\delta^2 Y^2)$ is divisible by $X^2-\delta^2 Y^2$ in
$K[X,Y]$, and hence is reducible if either $\deg(F)>2$ or $\delta\in K$.

Next we prove the ``only if" implication.  Assume that $F(X)-G(Y)$ is reducible in $K[X,Y]$.
Let $F_1,F_2,G_1,G_2\in K[X]$ satisfy the conclusions of Lemma~\ref{zfried}. Reducibility of $F(X)-G(Y)$
implies that $F_1(X)-G_1(Y)$ is reducible, so that $\deg(F_1)>1$.  Let $t$ be transcendental over $K$,
and let $\Omega$ be the splitting field of $F_1(X)-t$ over $K(t)$, which is also the splitting field of $G_1(X)-t$
over $K(t)$.  Let $u,v\in\Omega$ satisfy $F_1(u)=t=G_1(v)$.  Then $t=\infty$ is totally ramified in
$K(u)/K(t)$, so since $\charp(K)\nmid\deg(F_1)$ it follows (e.g.\ by Abhyankar's lemma)
that every place of $\Omega$ which lies over
$t=\infty$ must have ramification index equal to $\deg(F_1)$ in $\Omega/K(t)$.  Likewise,
each such place also has ramification index $\deg(G_1)$, so that $\deg(F_1)=\deg(G_1)$.
Write $F=A\circ X^2$ and $G=B\circ X^2$ with $A,B\in K[X]$, and also
$f(X)+Xf'(0)=X\hat{f}(X^2)$ and $g(X)+Xg'(0)=X\hat{g}(X^2)$ with $\hat{f},\hat{g}\in K[X]$.
Since $\deg(F_1)>1$, Lemma~\ref{zdecomp} implies that there is a degree-one $L\in K[X]$ for which
$F_1$ is either $A\circ L$ or $F\circ L$, so by Lemma~\ref{zram} the nonzero finite critical values of $F_1$
are the values $-\alpha \hat{f}(0)$ where $\alpha$ is a root of $\hat{f}(X)$.
Since $\Omega/K(t)$ is the Galois closure of $K(x)/K(t)$, 
a place $P$ of $K(t)$ lies under a place of $K(x)$ which is ramified in $K(x)/K(t)$
if and only if $P$ lies under a place of $\Omega$ which is ramified in $\Omega/K(t)$.
It follows that the critical values of $F_1$ are identical to the critical values of $G_1$, so
the values
$\alpha\hat{f}(0)$ where $\hat{f}(\alpha)=0$ are the same as the values $\beta\hat{g}(0)$ where
$\hat{g}(\beta)=0$.  Writing $\gamma:=\hat{g}(0)/\hat{f}(0)$, this implies that the roots of $\hat{g}(X)$
are the values $\alpha/\gamma$ where $\hat{f}(\alpha)=0$, which in turn are the roots of
$\hat{f}(\gamma X)$.  Since $\hat{f}$ and $\hat{g}$ are squarefree (by Lemma~\ref{zram}), 
it follows that $\hat{g}(X)$ is a constant multiple of $\hat{f}(\gamma X)$, and substituting $X=0$
shows that the constant is $\gamma$.  Thus $\hat{g}(X)=\gamma\cdot\hat{f}(\gamma X)$, so that
\[
g(X)+Xg'(0)=X\hat{g}(X^2)=\gamma X\hat{f}(\gamma X^2) =\delta(f(\delta X)+\delta Xf'(0))
\]
where $\delta^2=\gamma$.  Equating coefficients of $X$ on both sides yields
$2g'(0)=\delta(2\delta f'(0))$, so subtracting $X g'(0)$ from both sides yields $g(X)=\delta f(\delta X)$,
as desired.
\end{proof}

Finally, we prove the second part of Theorem~\ref{zmain}.

\begin{proof}[Proof of part (\ref{zmain2}) of Theorem~\ref{zmain}]
As in the first part of the previous proof, we write $F=A\circ X^2$ with $A\in K[X]$ and note that
$F(X)-G(Y)=A(X^2)-A(\delta^2 Y^2)$ is divisible by $X^2-\delta^2 Y^2$ in $K[X,Y]$.
Writing $B(X,Y):=(A(X)-A(Y))/(X-Y)$, the ratio $(F(X)-G(Y))/(X^2-\delta^2 Y^2)$ equals
$B(X^2,\delta^2 Y^2)$.  It suffices to show that $B(X^2,\delta^2 Y^2)$ is irreducible in $\bar{K}[X,Y]$,
so in what follows we assume that $K$ is algebraically closed.
Lemma~\ref{zram} shows that the derivative $A'(X)$ has $\deg(A)-1$ distinct roots, all of which
are simple, and distinct roots have distinct images under $A$.  Thus $A(X)$ is a Morse function in the
sense of \cite[p.~39]{Serre},  so if $t$ is transcendental over $K$ then the Galois group
of $A(X)-t$ over $K(t)$ is the symmetric group on $\deg(A)$ letters \cite[Thm.~4.4.5]{Serre}.
In particular, if $\deg(A)>1$ then this group is doubly transitive, so that $B(X,Y)$ is irreducible
 \cite[Lemma~3.2]{Turnwald}.
Let $x$ satisfy $F(x)=t$, and put $u:=x^2$ so that $A(u)=t$.  Let $H(X,Y)$ be an irreducible factor
of $B(X^2,\delta^2 Y^2)$, let $y$ satisfy $B(x^2,\delta^2 y^2)=0$, and put $v:=\delta^2 y^2$ so that
$A(u)=A(v)$.
Irreducibility of  $B(X,Y)$  implies that  $B(u,Y)$  is the minimal polynomial for
$v$  over  $K(u)$,  whence  $[K(u,v):K(u)]=\deg(A)-1$.  Also, since $t=\infty$ is totally ramified
in both  $K(u)/K(t)$  and  $K(v)/K(t)$,  and in both cases its ramification index is  $\deg(A)$  which
is coprime to  $p$,  Abhyankar's lemma implies that every place of  $K(u,v)$  which
lies over  $t=\infty$  has ramification index in  $K(u,v)/K(t)$  equal to  $\deg(A)$,  so each such
place is unramified in  $K(u,v)/K(u)$.  Thus  $u=\infty$  is unramified in  $K(u,v)/K(u)$,  but it is
totally ramified in  $K(x)/K(u)$,  so we must have $[K(x,v):K(u,v)]=2$.  Since  $A(X)$  is squarefree
and  $A(0)=0$,  we know that  $K(v)/K(t)$  is unramified over  $t=0$,  so that  $K(u,v)/K(u)$  is
unramified over  $u=0$.  Moreover, the places of  $K(u,v)$  which lie over  $u=0$  have $v$-values
being all the nonzero roots of  $A$.  Since  $K(u,v)/K(u)$  has branch points  $u=0$  and  $u=\infty$,
both of which are unramified in  $K(u,v)/K(u)$,  it follows that the branch points of
$K(x,v)/K(u,v)$  are the places lying over either $u=0$ or $u=\infty$.  Likewise the branch points of
$K(u,y)/K(u,v)$  are the places lying over either $v=0$ or $v=\infty$.  But the places of $K(u,v)$
which lie over $u=0$ do not lie over $v=0$, so  $K(x,v)/K(u,v)$  and  $K(u,y)/K(u,v)$  have distinct
branch points and hence are distinct extensions, whence $[K(x,y):K(x,v)]=2$.  This shows that
$[K(x,y):K(x)]=2[K(x,v):K(x)]=2(\deg(A)-1)$,  so that the $Y$-degree of  $H(X,Y)$  is  $2(\deg(A)-1)$,
which equals the $Y$-degree of  $B(X^2,\delta^2 Y^2)$.  The same argument applies to $X$-degrees, so
that  $B(X^2,\delta^2 Y^2)$  is irreducible.
\end{proof}


\end{document}